\crefname{equation}{}{}
\newtheorem{theorem}{Theorem}[section]
\newtheorem{lemma}[theorem]{Lemma}
\newtheorem{corollary}[theorem]{Corollary}
\newtheorem{proposition}[theorem]{Proposition}
\newtheorem{conjecture}[theorem]{Conjecture}
\theoremstyle{definition}
\newtheorem{definition}[theorem]{Definition}
\newtheorem{question}[theorem]{Question}
\theoremstyle{remark}
\newtheorem*{remark}{Remark}
\newcommand{\abs}[1]{\left\lvert#1\right\rvert}
\newcommand{\norm}[1]{\left\lVert#1\right\rVert}
\newcommand{\ang}[1]{\left\langle #1 \right\rangle}
\newcommand{\floor}[1]{\left\lfloor #1 \right\rfloor}
\newcommand{\ceil}[1]{\left\lceil #1 \right\rceil}
\newcommand{\RR}{\mathbb{R}}
\DeclareMathOperator{\rank}{rank}
\DeclareMathOperator*{\PSL}{PSL}
\title{Equiangular lines with a fixed angle}
\author[Jiang]{Zilin Jiang}
\author[Tidor]{Jonathan Tidor}
\author[Yao]{Yuan Yao}
\author[Zhang]{Shengtong Zhang}
\author[Zhao]{Yufei Zhao}
\thanks{
Jiang was supported by an AMS Simons Travel Grant and NSF Award DMS-1953946.
Tidor was supported by the NSF Graduate Research Fellowship Program DGE-1745302.
Zhao was supported by NSF Award DMS-1764176, the MIT Solomon Buchsbaum Fund, and a Sloan Research Fellowship.
}
\address{Jiang: Arizona State University, Tempe, AZ, USA}
\email{zilinj@asu.edu}
\address{Tidor, Yao, Zhang, Zhao: Massachusetts Institute of Technology, Cambridge, MA, USA}
\email{\{jtidor,yyao1,stzh1555,yufeiz\}@mit.edu}
\begin{document}

\begin{abstract}
  Solving a longstanding problem on equiangular lines, we determine, for each given fixed angle and in all sufficiently large dimensions, the maximum number of lines pairwise separated by the given angle.
  
  Fix $0 < \alpha < 1$. Let $N_\alpha(d)$ denote the maximum number of lines through the origin in $\mathbb{R}^d$ with pairwise common angle $\arccos \alpha$. Let $k$ denote the minimum number (if it exists) of vertices in a graph whose adjacency matrix has spectral radius exactly $(1-\alpha)/(2\alpha)$. If $k < \infty$, then $N_\alpha(d) = \lfloor k(d-1)/(k-1) \rfloor$ for all sufficiently large $d$, and otherwise $N_\alpha(d) = d + o(d)$. In particular, $N_{1/(2k-1)}(d) = \lfloor k(d-1)/(k-1) \rfloor$ for every integer $k\ge 2$ and all sufficiently large $d$.
  
  A key ingredient is a new result in spectral graph theory: the adjacency matrix of a connected bounded degree graph has sublinear second eigenvalue multiplicity.
\end{abstract}

\maketitle

\section{Introduction} \label{sec:intro}

A set of lines passing through the origin in $\RR^d$ is called \emph{equiangular} if they are pairwise separated by the same angle. Equiangular lines and their variants appear naturally in pure and applied mathematics. It is an old and natural problem to determine the maximum number of equiangular lines in a given dimension. The study of equiangular lines was initiated by Haantjes~\cite{Haa48} in connection with elliptic geometry and has subsequently grown into an extensively studied subject. Equiangular lines show up in coding theory as tight frames~\cite{SH03}. Complex equiangular lines, also known under the name SIC-POVM, play an important role in quantum information theory~\cite{RBSC04}.

The problem of determining $N(d)$, the maximum number of equiangular lines in $\RR^d$, was formally posed by van Lint and Seidel~\cite{LS66}. The exact value of $N(d)$ has been determined for only finitely many $d$ (see \cite{BY14, GKMS16}). A general upper bound $N(d) \le \binom{d+1}{2}$ was shown by Gerzon (see \cite{LS73}). It had remained open for some time whether there is a matching quadratic lower bound, until de Caen~\cite{dC00} gave a remarkable construction showing $N(d) \ge \tfrac29 (d+1)^2$ for $d$ of the form $d = 6\cdot 4^i - 1$, which in particular implies that $N(d) = \Theta(d^2)$ for all $d$. All examples of sets of $\Theta(d^2)$ equiangular lines in $\RR^d$ have angles approaching $90^\circ$ as $d \to \infty$. It turns out that a completely different behavior emerges when the common angle is held fixed as $d \to \infty$, which is the focus of this paper.

Let $N_\alpha(d)$ denote the maximum number of lines in $\RR^d$ through the origin with pairwise angle $\arccos \alpha$. Equivalently, $N_\alpha(d)$ is the maximum number of unit vectors in $\RR^d$ with pairwise inner products $\pm \alpha$. Lemmens and Seidel~\cite{LS73} in 1973 initiated the problem of studying $N_\alpha(d)$ for fixed $\alpha$ and large $d$. They completely determined the values of $N_{1/3}(d)$ for all $d$, and in particular proved that $N_{1/3}(d) = 2(d-1)$ for all $d \ge 15$. Neumann (see \cite{LS73}) showed that $N_\alpha(d) \le 2d$ unless $1/\alpha$ is an odd integer. It was conjectured by Lemmens and Seidel~\cite{LS73} and subsequently proved by Neumaier~\cite{Neu89} that $N_{1/5}(d) = \floor{3(d-1)/2}$ for all sufficiently large $d$. Neumaier~\cite{Neu89} writes that ``the next interesting case [$\alpha = 1/7$] will require substantially stronger techniques.''

We focus on the problem for fixed $\alpha$ and large $d$ and refer the readers to \cite{GY18} for discussion on bounds for smaller values of $d$.

Recently there were a number of significant advances giving new upper bounds on $N_\alpha(d)$, starting with the work of Bukh~\cite{Bukh16} who proved that $N_\alpha(d)$ is at most linear in the dimension for every fixed $\alpha$.\footnote{In fact a stronger version of the inequality was shown by Bukh~\cite{Bukh16}, namely that for every fixed $\beta > 0$ one cannot have more than $C_\beta d$ unit vectors in $\RR^d$ whose mutual inner products lie in $[-1,-\beta]\cup \{\alpha\}$.} Then came a surprising breakthrough of Balla, Dr\"axler, Keevash, and Sudakov~\cite{BDKS18}, who showed that $\limsup_{d \to \infty} N_\alpha(d)/d$, as a function of $\alpha \in (0,1)$, is maximized at $\alpha = 1/3$, and in fact this limit is at most 1.93 unless $\alpha = 1/3$, in which case the limit is 2. In addition to introducing many new tools and ideas, their important paper presents an approach to the equiangular lines problem that forms a bedrock for subsequent work.

An outstanding problem is to determine $\lim_{d \to \infty} N_\alpha(d)/d$ for every $\alpha$. The results in \cite{LS73, Neu89} suggest, and it is explicitly conjectured in \cite[Conjecture 8]{Bukh16}, that $N_{1/(2k-1)}(d) = kd/(k-1) + O_k(1)$ as $d \to \infty$. A conjectural value of $\lim_{d \to \infty} N_\alpha(d)/d$ for every $\alpha$ was given in \cite{JP20} in terms of the following spectral graph quantity.

\begin{definition}[Spectral radius order] \label{def:spectral-order}
  Define the \emph{spectral radius order}, denoted $k(\lambda)$, of a real $\lambda > 0$ to be the smallest integer $k$ so that there exists a $k$-vertex graph $G$ whose spectral radius $\lambda_1(G)$ is exactly $\lambda$. (When we say the spectral radius or eigenvalues of a graph we always refer to its adjacency matrix.) Set $k(\lambda) = \infty$ if no such graph exists.
\end{definition}

Jiang and Polyanskii~\cite{JP20} conjectured that $\lim_{d \to \infty} N_\alpha(d)/d = k/(k-1)$ where $k = k(\lambda)$ with $\lambda = (1-\alpha)/(2\alpha)$. They proved their conjecture whenever $\lambda < \sqrt{2+\sqrt{5}} \approx 2.058$ (the cases $\alpha = 1/3,1/5$, corresponding to $\lambda =1,2$, were known earlier, as discussed). In particular, it was shown that $N_{1/(1+2\sqrt{2})}(d) = 3d/2 + O(1)$. Furthermore, it was shown that $N_\alpha(d) \le 1.49d$ for every $\alpha \notin \{1/3,1/5,1/(1+2\sqrt{2})\}$ and sufficiently large $d > d_0(\alpha)$, improving the earlier bound in \cite{BDKS18}.

There is a natural limitation to all previous techniques when $\lambda \ge \sqrt{2+\sqrt{5}}$, which Neumaier had already predicted at the end of his paper \cite{Neu89} (hence his comment about $\alpha=1/7$, i.e., $\lambda = 3$, mentioned earlier). We refer to \cite{JP20} for discussion.

We completely settle all these conjectures in a strong form.

\begin{theorem}[Main theorem] \label{thm:main}
  Fix $\alpha \in (0, 1)$. Let $\lambda= (1-\alpha)/(2\alpha)$ and $k = k(\lambda)$ be its spectral radius order.
  The maximum number $N_\alpha(d)$ of equiangular lines in $\RR^d$ with common angle $\arccos \alpha$ satisfies
  \begin{enumerate}[(a)]
    \item $N_\alpha(d) = \floor{k(d-1)/(k-1)}$ for all sufficiently large $d > d_0(\alpha)$ if $k < \infty$. \label{thm:main-a}
    \item $N_\alpha(d) = d + o(d)$ as $d \to \infty$ if $k = \infty$. \label{thm:main-b}
  \end{enumerate}
\end{theorem}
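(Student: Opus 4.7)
The plan is to prove matching upper and lower bounds. For the lower bound in (a), take a connected $k$-vertex graph $H$ with $\lambda_1(H)=\lambda$. Letting $m=\lfloor(d-1)/(k-1)\rfloor$ and choosing unit vectors with $\langle v_i,v_j\rangle=-\alpha$ along edges of $m$ disjoint copies of $H$ and $+\alpha$ otherwise, the resulting Gram matrix $(1-\alpha)I+\alpha J-2\alpha A$ is PSD of rank exactly $m(k-1)+1\le d$, realizing $mk=\lfloor k(d-1)/(k-1)\rfloor$ equiangular lines. The lower bound in (b) is standard. The substance of the theorem is the upper bound.

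Given $N=N_\alpha(d)$ equiangular lines represented by unit vectors $v_1,\dots,v_N$ with $\langle v_i,v_j\rangle=\pm\alpha$, I form the graph $G$ on $[N]$ whose edges record the negative inner products. The Gram matrix $M=(1-\alpha)I+\alpha J-2\alpha A_G$ is PSD of rank at most $d$; inspecting $\ker M$ and factoring out the one-dimensional contribution of $\mathbf{1}$ shows that $\lambda=(1-\alpha)/(2\alpha)$ is an eigenvalue of $A_G$ with multiplicity at least $N-d-1$. Following the Bukh--Balla--Dr\"axler--Keevash--Sudakov--Jiang--Polyanskii framework \cite{Bukh16,BDKS18,JP20}, a Seidel switching together with a pigeonhole argument reduces to the case where $G$ has maximum degree bounded by some $\Delta(\alpha)$.

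I would then invoke the headline new theorem: a connected graph on $n$ vertices with maximum degree $\le\Delta$ has second eigenvalue multiplicity $o_\Delta(n)$. Applying this to each connected component $C$ of $G$ and using that the Perron eigenvalue $\lambda_1(A_C)$ is simple gives $\operatorname{mult}_{A_C}(\lambda)\le 1+o(|C|)$; summing across the $c$ components yields $N-d-1\le c + o(N)$. Since switching can be arranged to avoid isolated vertices, $c\le N/2$, and this already forces $N=d+o(d)$, proving (b). For (a), testing $M\succeq 0$ against Perron eigenvectors of individual components bounds by $O_\alpha(1)$ both the number of components with $\lambda_1(A_C)>\lambda$ and the total vertex count of components with $\lambda_1(A_C)<\lambda$. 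After discarding this $O_\alpha(1)$ junk, every remaining component has $\lambda_1(A_C)=\lambda$ and therefore at least $k$ vertices, so $c\le N/k$, which together with $N\le c+d+1+O(1)$ gives $N\le k(d-1)/(k-1)+O_\alpha(1)$.

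The main obstacle is the sublinear second eigenvalue multiplicity theorem itself, which is a genuinely new spectral input; classical tools (interlacing, mass transport, Chebyshev-polynomial tricks) fall short of an $o(n)$ bound, and the proof almost certainly requires a careful argument playing the dimension of an eigenspace off against walk counts in a bounded-degree graph. A secondary subtlety is tightening the $O_\alpha(1)$ additive error in (a) to match the floor $\lfloor k(d-1)/(k-1)\rfloor$ exactly: this demands an integrality step (for example, showing that any junk components can always be absorbed into the lower-bound construction once $d$ is sufficiently large), rather than a purely asymptotic estimate.
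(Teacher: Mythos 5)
Your high-level skeleton matches the paper's (switch to a bounded-degree associated graph, invoke the sublinear eigenvalue-multiplicity theorem, analyze components via Perron--Frobenius), but several steps as written are false or unjustified, and they are exactly the points where the paper has to work. First, your per-component bound $\operatorname{mult}_{A_C}(\lambda)\le 1+o(\abs{C})$ does not follow from the second-eigenvalue-multiplicity theorem alone: a non-Perron occurrence of $\lambda$ in a component $C$ need not be $\lambda_2(C)$, and a fixed positive value sitting deep in the spectrum of a connected bounded-degree graph \emph{can} have linear multiplicity (attach two pendant paths of length $2$ to every vertex of a long path: the value $1$ is an eigenvalue of multiplicity at least $n/5$, while $\lambda_2 > 1$). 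What rescues this is an observation missing from your write-up: since $\lambda I - A_G + \tfrac12 J\succeq 0$ and $J$ has rank $1$, the matrix $\lambda I - A_G$ has at most one negative eigenvalue, so $\lambda_2(G)\le\lambda$, hence $\lambda_2(C)\le\lambda$ for every component and any non-Perron occurrence of $\lambda$ is forced to be exactly the second eigenvalue. Second, even granting that, you cannot sum $o(\abs{C})$ over components to get $o(N)$: the actual bound is $O(\abs{C}/\log\log\abs{C})$, which is $\Theta(\abs{C})$ for bounded-size components, and $G$ may have linearly many of those. The paper avoids this by showing (testing positive semidefiniteness against a suitable difference of Perron vectors of two components) that at most one component $C_1$ can have spectral radius exceeding $\lambda$, and in that case every other component has spectral radius \emph{strictly} below $\lambda$; the multiplicity theorem is then applied to $C_1$ alone, never summed.

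These gaps propagate into both conclusions. In (b), your chain $N-d-1\le c+o(N)$ with $c\le N/2$ yields only $N\le 2d+o(d)$, not $d+o(d)$; the correct route uses $k(\lambda)=\infty$ to conclude that \emph{no} component has Perron eigenvalue $\lambda$ (so the ``$+1$ per component'' disappears entirely), and then the single-component fact above gives $N\le d+1+O_\Delta(N/\log\log N)$. In (a), your claim that components with $\lambda_1(A_C)<\lambda$ have $O_\alpha(1)$ total size is false --- the associated graph can contain linearly many isolated vertices, each of spectral radius $0<\lambda$ --- though it is also unnecessary, since such components contribute nothing to $\ker(\lambda I-A_G)$ and full rank to $\lambda I-A_G$. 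More importantly, your argument can give at best $N\le k(d-1)/(k-1)+o(d)$ (the multiplicity of $\lambda$ inside a component of spectral radius $>\lambda$ is only $o(N)$, not $O(1)$), and the ``absorb the junk into the construction'' fix you allude to is not an argument. The paper gets the exact floor by a dichotomy: if $\lambda_1(G)=\lambda$, then $N=\rank(\lambda I-A_G)+\dim\ker(\lambda I-A_G)\le (d-1)+j$, where $j$, the number of components of spectral radius exactly $\lambda$, is an \emph{integer} satisfying $(k-1)j\le\rank(\lambda I - A_G)\le d-1$, which gives $N\le\floor{k(d-1)/(k-1)}$ on the nose; if instead $\lambda_1(G)>\lambda$, then no component has spectral radius $\lambda$ and the multiplicity theorem gives $N\le d+O_\Delta(d/\log\log d)$, which is below the floor once $d$ is large. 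Finally, a small but real error in your lower bound: $m=\floor{(d-1)/(k-1)}$ disjoint copies of $H$ give $mk$ lines, and $mk=\floor{k(d-1)/(k-1)}$ only when $(k-1)\mid(d-1)$; you must add $(d-1)-(k-1)m$ isolated vertices, as the paper does, to reach the floor in general.
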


\begin{remark}
  Our proof of \ref{thm:main-a} works for $d > 2^{2^{C \lambda k}}$ with some constant $C$. For \ref{thm:main-b}, it is known \cite[Propositions 15 and 23]{JP20} that $d \le N_\alpha(d) \le d+2$ unless $\lambda$ is a totally real algebraic integer that is largest among its conjugates.\footnote{The \emph{conjugates} of an algebraic integer $\lambda$ are the other roots of its minimal polynomial. We say that $\lambda$ is \emph{totally real} if all its conjugates are real.} For the remaining values of $\alpha$, we leave it as an open problem to determine the growth rate of $N_\alpha (d) - d$.
\end{remark}

If $k\ge 2$ is an integer and $\alpha = 1/(2k-1)$, then $\lambda = k-1$ and $k(\lambda) = k$ (the complete graph $K_k$ is the graph on fewest vertices with spectral radius $k-1$), so the following corollary confirms Bukh's conjecture~\cite{Bukh16} in a stronger form, and extending the only two previously known cases of $k=2$ \cite{LS73} and $k=3$ \cite{Neu89}.

\begin{corollary}
  For every fixed integer $k \ge 2$, one has $N_{1/(2k-1)}(d) = \floor{k(d-1)/(k-1)}$ for all sufficiently large $d > d_0(k)$.
\end{corollary}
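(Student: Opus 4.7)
The plan is to derive this corollary as an immediate consequence of \cref{thm:main}\ref{thm:main-a} applied with $\alpha = 1/(2k-1)$. The work reduces to two short verifications: computing the associated spectral parameter $\lambda$, and determining its spectral radius order $k(\lambda)$.

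First I would compute $\lambda = (1-\alpha)/(2\alpha)$ with $\alpha = 1/(2k-1)$. Substituting and simplifying gives
\[
\lambda = \frac{1 - \tfrac{1}{2k-1}}{2 \cdot \tfrac{1}{2k-1}} = \frac{\tfrac{2k-2}{2k-1}}{\tfrac{2}{2k-1}} = k-1.
\]

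Next I would show $k(\lambda) = k(k-1) = k$. The upper bound is witnessed by the complete graph $K_k$: its adjacency matrix has eigenvalues $k-1$ (once) and $-1$ ($k-1$ times), so $\lambda_1(K_k) = k-1$ and $K_k$ has $k$ vertices. For the matching lower bound, I would invoke the standard fact that the spectral radius of any graph $G$ on $m$ vertices satisfies $\lambda_1(G) \le m - 1$, with equality iff $G = K_m$. Consequently, any graph on fewer than $k$ vertices has spectral radius strictly less than $k-1$, so no such graph can achieve spectral radius exactly $k-1$. Hence $k(k-1) = k$ as claimed.

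With these two computations in hand, \cref{thm:main}\ref{thm:main-a} directly yields $N_{1/(2k-1)}(d) = \lfloor k(d-1)/(k-1) \rfloor$ for all $d > d_0(k)$. There is essentially no obstacle here beyond the two verifications above; all the substantive work is carried out inside the proof of the Main Theorem, and the role of the corollary is just to record the clean special case in which $\lambda$ is an integer and the extremal graph $K_k$ is transparent. The only minor subtlety worth flagging is the uniqueness clause in the bound $\lambda_1(G) \le m-1$, which is what rules out smaller graphs attaining spectral radius exactly $k-1$.
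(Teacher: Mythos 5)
Your proposal is correct and takes essentially the same route as the paper, which derives the corollary from \cref{thm:main}\ref{thm:main-a} by computing $\lambda = k-1$ and noting that $K_k$ is the graph on fewest vertices with spectral radius $k-1$. One small remark: the uniqueness clause you flag is unnecessary, since for a graph on $m < k$ vertices the plain bound already gives $\lambda_1(G) \le m-1 \le k-2 < k-1$.
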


\section{Proof ideas} \label{sec:sketch}

In this section we summarize several key ideas used in the proof and discuss their origins.

\smallskip

\emph{Connection to spectral graph theory.} Choose a unit vector in the direction of each line in the equiangular set. By considering the Gram matrix, we recast the problem to one concerning the spectrum of the adjacency matrix of an associated graph. The connection between equiangular lines and spectral graph theory has been well known from early works, making equiangular lines one of the foundational problems of algebraic graph theory (e.g., see \cite[Chapter 11]{GR01}).

\smallskip

\emph{Forbidden induced subgraphs.} Using the fact that the Gram matrix is positive semidefinite, we show that the associated graph cannot have certain induced subgraphs. This idea has appeared in the early works of Lemmens and Seidel~\cite{LS66} and Neumaier~\cite{Neu89}, and it was reintroduced in recent papers \cite{BDKS18, Bukh16, JP20} under the guise of taking an orthogonal projection onto some subspace. In our proof, we do not take projections; instead we simply verify the forbidden induced configurations by testing positive semidefiniteness using appropriately chosen vectors.

\smallskip

\emph{Switching}. Given a set of unit vectors representing an equiangular lines configuration, we may negate some unit vector without changing the configuration of lines. The corresponding operation on the associated graph picks some vertex and swaps the adjacency and non-adjacency relations coming from that vertex. The idea of switching already appears in the early work of van Lint and Seidel~\cite{LS66}. It was further used by Neumaier~\cite{Neu89} together with an application of Ramsey's theorem to determine $N_{1/5}(d)$.

A novel extension of the switching argument was introduced in \cite{BDKS18}, combining the knowledge of forbidden induced subgraphs (mentioned above) with an application of Ramsey's theorem. This can be used to show that one can switch some of the vertices in the associated graph so that it has bounded degree. 

\begin{theorem} \label{thm:switch}
  For every $\alpha\in (0, 1)$, there exists some $\Delta$ (depending only on $\alpha$) so that for every set of equiangular lines in $\RR^d$ with common angle $\arccos\alpha$, one can choose a set $S$ of unit vectors, with one unit vector in the direction of each line in the equiangular set, so that each unit vector in $S$ has inner product $-\alpha$ with at most $\Delta$ other vectors in $S$.
\end{theorem}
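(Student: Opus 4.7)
The plan is to recast the configuration as a signed graph under Seidel switching, use positive semi-definiteness of the Gram matrix to rule out large $-\alpha$-cliques, and then combine this forbidden-subgraph bound with Ramsey's theorem and a multi-vertex switching argument to force bounded $-\alpha$-degree.

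First I would pick unit vectors $v_1,\ldots,v_n$ in the line directions and form the signed graph on $[n]$ in which each edge $\{i,j\}$ is labelled by $\mathrm{sign}\langle v_i,v_j\rangle$. Negating a subset $T$ of the $v_i$ (Seidel switching at $T$) preserves the line configuration and flips every edge with exactly one endpoint in $T$; so writing $N_-(v)$ for the $-\alpha$-neighbourhood of $v$, the goal becomes exhibiting a switching with $|N_-(v)|\le\Delta$ for every $v$. The core PSD input is a bound on $-\alpha$-cliques: if $T$ is any set of unit vectors with all pairwise inner products $-\alpha$, then $0\le\|\sum_{u\in T}u\|^2=|T|\bigl(1-(|T|-1)\alpha\bigr)$, forcing $|T|\le1+1/\alpha$, and this bound persists under any switching.

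Now I would fix a signing minimizing the total number $E$ of $-\alpha$-edges; at this minimum every single-vertex switch gives $|N_-(v)|\le|N_+(v)|$. Set $t=\ceil{1/\alpha}+2$, let $R_0=R(t,t)$ be the diagonal Ramsey number, and take $\Delta$ somewhat larger than $R_0$. Assuming for contradiction some vertex $v$ satisfies $|N_-(v)|>\Delta$, applying Ramsey's theorem to the induced signed graph on $N_-(v)$ yields a monochromatic $t$-set $S\subseteq N_-(v)$. The forbidden-subgraph bound rules out $S$ being a $-\alpha$-clique (else $S\cup\{v\}$ would be such a clique of size $t+1>1+1/\alpha$), so $S$ must be a $+\alpha$-clique: the vertex $v$ is $-\alpha$-joined to a large set whose members are pairwise $+\alpha$-joined.

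The main obstacle is that this star-with-clique configuration is not directly ruled out by PSD, since the Gram matrix of $\{v\}\cup S$ has spectrum $\{1+t\alpha,(1-\alpha)^{(t)}\}$ and is positive definite. To close the argument I would exploit $E$-minimality globally: the multi-vertex Seidel switch on $S$ must not decrease $E$, which (since the $t$ edges from $v$ to $S$ all flip from $-$ to $+$) gives $e_+(S,R)\ge e_-(S,R)+t$ with $R=V\setminus(S\cup\{v\})$, a large surplus of $+\alpha$-edges between $S$ and $R$. Iterating Ramsey extraction on this surplus of cross-edges together with the single-vertex inequality $|N_-(u)|\le|N_+(u)|$ at each $u\in S$ should locate an induced subconfiguration whose Gram matrix is not PSD, contradicting the standing hypothesis. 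The main technical difficulty lies in organising these iterated Ramsey and switching steps so that the combinatorial surplus certifies a genuine PSD violation, and it is precisely this step that fixes the final dependence $\Delta=\Delta(\alpha)$.
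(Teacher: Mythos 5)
Your setup (the signed-graph/switching framework and the PSD bound on $-\alpha$-cliques) matches the paper's starting point, and your observation that a vertex $-\alpha$-joined to a large $+\alpha$-clique has a positive \emph{definite} Gram matrix is correct --- this is precisely the barrier that makes the naive ``Ramsey inside a negative neighborhood'' argument fail. But the step you propose to get around it is a genuine gap, not a technical detail. The inequality you extract from $E$-minimality, $e_+(S,R)\ge e_-(S,R)+t$, is a surplus of only the constant $t$ over a complete signed bipartite pair carrying $t\abs{R}$ edges, so it barely says anything. Worse, no surplus of $+\alpha$-edges can ever ``certify a genuine PSD violation'': for every $m$ there exist $m$ unit vectors with all pairwise inner products equal to $+\alpha$ (the Gram matrix $(1-\alpha)I+\alpha J$ is positive definite), so configurations rich in positive edges are always realizable, and PSD violations can only come from negative-edge structure --- which your scheme never produces. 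Your proposal concedes that organizing the iterated Ramsey/switching steps is ``the main technical difficulty,'' but that step \emph{is} the theorem: as written there is no identified forbidden configuration and no mechanism for producing one. Note also that nothing in your argument shows the globally $E$-minimizing signing has bounded maximum degree; minimality gives only $\abs{N_-(v)}\le \abs{N_+(v)}$ at each vertex, i.e., negative degree at most $(n-1)/2$, and controlling the \emph{total} number of negative edges controls the average degree, not the maximum.

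The idea you are missing is to run PSD arguments \emph{relative to a large independent set} $X$, found exactly the way you found your positive clique (Ramsey plus the clique bound), but applied to the whole vertex set so that $X$ may be taken of any constant size. The point is that a test vector whose entries on $X$ are chosen to make the total entry sum zero annihilates the $\tfrac12 J$ term in $\lambda I - A_G + \tfrac12 J$, and this ``grounding'' is what lets positive semidefiniteness see beyond the clique bound. It yields the two statements of \cref{prop:independent}: (a) among the common non-neighbors of $X$, the negative graph has maximum degree at most $\lceil\lambda^2\rceil$ (so negative stars are forbidden \emph{there}, even though, consistent with your observation, they are not forbidden globally); and (b) for each nonempty proper $Y\subset X$, only boundedly many vertices attach to $X$ exactly along $Y$. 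The switching is then done by majority vote relative to $X$ --- negate every vertex adjacent to more than half of $X$ --- rather than by global minimization; afterwards (b) bounds the number of vertices lying outside the common non-neighborhood of $X$, and (a) bounds each remaining vertex's degree inside it. Without some analogue of this device, the $\tfrac12 J$ term blocks every local PSD argument, which is exactly the obstruction your proposal runs into and does not overcome.
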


The proof of this theorem follows by combining Lemmas 2.7 and 2.8 of \cite{BDKS18}. Since this result is an important ingredient of our proof and does not appear explicitly in \cite{BDKS18}, we give a self-contained and streamlined proof in \cref{sec:switch}.

\smallskip

\emph{Second eigenvalue multiplicity.} Our most significant new contribution is an upper bound on the second eigenvalue multiplicity of the associated graph. Let $\lambda_1(G) \ge \lambda_2(G) \ge \dots \ge \lambda_{|G|}(G)$ be the eigenvalues of the adjacency matrix of $G$, accounting for multiplicities as usual. We call $\lambda_j(G)$ the \emph{$j$-th eigenvalue} of $G$.

\begin{theorem} \label{thm:eigen-mult}
  For every $j$ and every $\Delta$, there is a constant $C = C(\Delta,j)$ so that every connected $n$-vertex graph with maximum degree at most $\Delta$ has $j$-th eigenvalue multiplicity at most $Cn/\log\log n$.
\end{theorem}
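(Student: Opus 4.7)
The plan is a local-to-global restriction argument using balls of radius $r = \Theta(\log \log n)$. Let $\lambda = \lambda_j(G)$ have multiplicity $m$, write $V \subseteq \RR^n$ for the $\lambda$-eigenspace, and let $\Pi$ denote the orthogonal projection onto $V$, so $\operatorname{tr}(\Pi) = m$ and $A\Pi = \Pi A = \lambda \Pi$ (hence every column of $\Pi$ is itself a $\lambda$-eigenvector). For any vertex $u$ and radius $r$, the restriction map $V \to \RR^{B(u,r)}$ has rank at most $\abs{B(u,r)} \leq \Delta^r$, so if no nonzero $v \in V$ vanishes identically on $B(u,r)$ then $m \leq \Delta^r$. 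Taking $r = \floor{c \log_\Delta \log n}$ makes $\Delta^r = (\log n)^c$, which is much smaller than $n / \log \log n$, so the desired bound would follow as soon as we exhibit one such ``good'' center $u$.

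To exhibit a good center, the core task is to prevent a $\lambda$-eigenvector from vanishing on an entire ball $B(u,r)$. The eigenvalue equation $\lambda v(w) = \sum_{w' \sim w} v(w')$ forces, for any $v$ vanishing on $B(u,r)$, a linear relation on each outer shell $S_t = \{w : d(u,w) = t\}$, and these relations propagate outward shell by shell. A cleaner packaging is via a trace inequality: choose a Chebyshev-like polynomial $p$ of degree $r$ with $p(\lambda) = 1$ and sup-norm controlled on $[-\Delta,\Delta]$, so that
\[
m = m\, p(\lambda)^2 \leq \operatorname{tr}(p(A)^2) = \sum_u (p(A)^2)_{uu},
\]
and each diagonal entry $(p(A)^2)_{uu}$ depends only on the ball $B(u,r)$, hence can be bounded in terms of its size and local structure. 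Combined with an averaging over $u$ using $\operatorname{tr}(\Pi) = m$ (so most $u$ satisfy $\Pi_{uu} \gtrsim m/n$), this should isolate a vertex where the local structure is rich enough.

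The main obstacle is the regime where $\lambda$ sits inside $[-2\sqrt{\Delta - 1}, 2\sqrt{\Delta - 1}]$, the spectrum of the infinite $\Delta$-regular tree (the universal cover of $G$). Locally the eigenvalue recurrence then admits many tree-like solutions, and no low-degree polynomial can simultaneously be $1$ at $\lambda$ and small on the entire tree spectrum, so purely local or purely tree-based reasoning cannot exclude all vanishing eigenvectors. Overcoming this requires genuinely global input, e.g.\ stitching several disjoint balls together via the connectedness of $G$, or exploiting non-tree-like features (short cycles) forced to appear by a pigeonhole over the isomorphism types of rooted $r$-balls. Balancing the ball radius $r \sim \log_\Delta \log n$ against the diameter, which is at least $\log_\Delta n$, is precisely what produces the $\log \log n$ denominator rather than a polynomial improvement.
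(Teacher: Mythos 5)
Your primary strategy cannot work, and the paper itself contains the counterexample. You propose to find a single vertex $u$ such that no nonzero $\lambda$-eigenvector vanishes on $B(u,r)$ with $r = \Theta(\log_\Delta \log n)$, which would give $m \le \abs{B(u,r)} \le \Delta^{r+1} = \operatorname{polylog}(n)$. But this bound is false in general: for the Cayley graph of $\PSL(2,p)$ with two standard generators (the remark at the end of Section 4 of the paper), one gets a connected $4$-regular graph on $n = p(p^2-1)/2$ vertices in which every eigenvalue other than $\lambda_1$ has multiplicity at least $(p-1)/2 = \Theta(n^{1/3})$. Since $n^{1/3} \gg \operatorname{polylog}(n)$, in such graphs the restriction map from the second eigenspace to \emph{every} ball of radius $O(\log\log n)$ has a large kernel; i.e., for every center $u$ there are many independent eigenvectors vanishing identically on $B(u,r)$. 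So the ``good center'' you need does not exist, and no amount of averaging over $\Pi_{uu}$ can produce one. Any argument that bounds the multiplicity by the size of a single ball is doomed for the same reason.

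Your fallback trace inequality $m \le \operatorname{tr}(p(A)^2)$ is in the right spirit --- the paper's Lemma 4.4 is exactly a trace/closed-walk bound of this type, with $p(x) = x^{r_2}$ --- but you stop precisely at the hard case ($\lambda$ inside the tree spectrum), offering only unexecuted suggestions (``stitching balls,'' ``pigeonhole over short cycles''). The paper's resolution is structurally different and is the idea you are missing: \emph{delete vertices first, then apply the trace bound, then transfer back by Cauchy interlacing}. Concretely, one deletes an $r_1$-net $V_0$ of size $\le \lceil n/(r_1+1)\rceil$ with $r_1 = \Theta(\log\log n)$, together with the at most $j\Delta^{O(r)}$ vertices whose $r$-balls have spectral radius exceeding $\lambda$ (this set is small by interlacing, since $\lambda = \lambda_j(G)$). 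Removing a net strictly and quantitatively lowers every local spectral radius: $\lambda_1(H_{r_2}(v))^{2r_1} \le \lambda^{2r_1} - 1$, because each vertex loses at least one closed walk of length $2r_1$ (the walk to the nearest net vertex and back). Now the trace bound applied to the \emph{deleted} graph $H$ with exponent $2r_2$, $r_2 = \Theta(\log n)$, forces the multiplicity of $\lambda$ in $H$ to be at most $n(1-\lambda^{-2r_1})^{r_2/r_1} \le e^{-\sqrt{\log n}}n$, and interlacing says deleting $s$ vertices changes any eigenvalue multiplicity by at most $s$. The bound $Cn/\log\log n$ is exactly the size of the deleted net --- it has nothing to do with balancing ball radius against the diameter, contrary to the heuristic in your last paragraph. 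The lossy interlacing step is what lets the argument tolerate eigenvectors that vanish on balls, which, as the $\PSL(2,p)$ example shows, is unavoidable.
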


We only need $j=2$ in this paper, though the proof for any fixed $j$ is essentially the same. The $j$-th eigenvalue multiplicity bound is used in a follow-up work on spherical two-distance sets~\cite{JTYZZ2}.

We introduce a novel approach to bound eigenvalue multiplicity using the Cauchy interlacing theorem along with comparing local and global spectral data via counting closed walks in the graph after deleting a small fraction of the vertices. See \cref{sec:eigen-mult} for the proof as well as remarks on bounds.

In contrast, the strategy in \cite{BDKS18} and later adapted in \cite{JP20} had the flavor of using projections to exclude a finite set of subgraphs with spectral radii exceeding $\lambda$, though this strategy runs into a serious limitation when $\lambda \ge \sqrt{2+\sqrt{5}}$, as foreseen by Neumaier~\cite{Neu89}, since the family of forbidden subgraphs has infinitely many minimal elements~\cite{JP20}. Our method overcomes this significant barrier.

\section{Proof of the main theorem}

A set of $N$ equiangular lines can be represented by unit vectors $v_1, \dots, v_N \in \RR^d$ with $\ang{v_i, v_j} = \pm \alpha$ for all $i \ne j$. The Gram matrix $(\ang{v_i, v_j})_{i,j}$ is a positive semidefinite matrix with $1$'s on the diagonal and $\pm \alpha$ everywhere else, so it is equal to $(1-\alpha) I + \alpha (J - 2A_G)$, where $I$ is the identity matrix, $J$ the all-1's matrix, and $A_G$ the adjacency matrix of an \emph{associated graph $G$} on vertex set $[N]$ where $ij$ is an edge whenever $\ang{v_i, v_j} = -\alpha$. Dividing by $2\alpha$, we can rewrite this matrix as $\lambda I - A_G + \tfrac12 J$, where $\lambda = (1-\alpha)/(2\alpha)$. Conversely, for every $G$ and $\lambda$ for which the above matrix is positive semidefinite and has rank $d$, there exists a corresponding configuration of $N$ equiangular lines in $\RR^d$, one line for each vertex of $G$, with pairwise inner product $\pm \alpha$. Thus the equiangular lines problem has the following equivalent spectral graph theoretic formulation.

\begin{lemma} \label{lem:line-graph}
  There exists a family of $N$ equiangular lines in $\RR^d$ with common angle $\arccos{\alpha}$ if and only if there exists an $N$-vertex graph $G$ such that the matrix $\lambda I - A_G + \frac{1}{2}J$ is positive semidefinite and has rank at most $d$, where $\lambda= (1-\alpha)/(2\alpha)$ and $J$ is the all-1's matrix. \qed
\end{lemma}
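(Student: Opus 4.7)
The statement is essentially a restatement of the preceding paragraph, so the plan is to formalize the bijection between equiangular line systems and graphs with the specified spectral condition.

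For the forward direction, suppose we have $N$ equiangular lines in $\RR^d$ with common angle $\arccos\alpha$. I would pick a unit vector $v_i$ along each line (with arbitrary choice of sign) so that $\ang{v_i,v_i}=1$ and $\ang{v_i,v_j}=\pm\alpha$ for $i\neq j$. Then I would form the associated graph $G$ on vertex set $[N]$ by declaring $ij$ an edge precisely when $\ang{v_i,v_j}=-\alpha$. A direct entrywise check confirms that the Gram matrix $M = (\ang{v_i,v_j})_{i,j}$ satisfies $M = (1-\alpha)I + \alpha(J - 2A_G)$. Since $M$ is a Gram matrix of vectors in $\RR^d$, it is positive semidefinite and has rank equal to the dimension of the span of the $v_i$, hence rank at most $d$. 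Dividing by $2\alpha>0$ preserves positive semidefiniteness and rank, so $\lambda I - A_G + \tfrac12 J = \frac{1}{2\alpha}M$ has the desired properties.

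For the reverse direction, given an $N$-vertex graph $G$ such that $M' := \lambda I - A_G + \tfrac12 J$ is positive semidefinite of rank at most $d$, I would consider $M := 2\alpha M' = (1-\alpha)I + \alpha(J-2A_G)$, which is also positive semidefinite of the same rank. Any positive semidefinite matrix of rank at most $d$ is the Gram matrix of some vectors $v_1,\dots,v_N \in \RR^d$; from the diagonal entries $M_{ii}=1$ and the off-diagonal entries $M_{ij}\in\{-\alpha,\alpha\}$, these vectors are unit vectors with pairwise inner product $\pm\alpha$. The lines spanned by $v_1,\dots,v_N$ are then $N$ equiangular lines with common angle $\arccos\alpha$ (one must note that $v_i\neq \pm v_j$ for $i\neq j$ since $\abs{\ang{v_i,v_j}}=\alpha<1$, so the lines are distinct).

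There is no serious obstacle here; the argument is a direct translation between Gram matrices and vector configurations, together with the explicit algebraic identity for $M$. The only minor point worth mentioning is the equivalence between ``$M$ is a Gram matrix of vectors in $\RR^d$'' and ``$M$ is positive semidefinite of rank at most $d$,'' which follows from the spectral decomposition $M = U^T U$ with $U\in\RR^{d\times N}$ whose columns are the desired vectors.
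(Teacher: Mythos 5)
Your proposal is correct and follows essentially the same argument as the paper, which justifies this lemma in the paragraph preceding its statement: the identity $\lambda I - A_G + \tfrac12 J = \frac{1}{2\alpha}\bigl((1-\alpha)I + \alpha(J-2A_G)\bigr)$ for the (rescaled) Gram matrix, together with the standard fact that positive semidefinite matrices of rank at most $d$ are exactly the Gram matrices of vectors in $\RR^d$. Your added remark that the lines are distinct because $\abs{\ang{v_i,v_j}} = \alpha < 1$ is a nice touch of rigor that the paper leaves implicit.
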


We first establish the lower bounds.

\begin{proposition} \label{prop:lower-bounds}
  Let $\alpha\in (0, 1)$ and $\lambda = (1-\alpha)/(2\alpha)$.
  Let $d$ be a positive integer. One has $N_\alpha(d)\ge d$.
  If $k = k(\lambda) < \infty$, then $N_\alpha(d) \ge \floor{k(d-1)/(k-1)}$.
\end{proposition}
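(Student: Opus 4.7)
The plan is to invoke \cref{lem:line-graph}: to show $N_\alpha(d) \ge N$, it suffices to exhibit an $N$-vertex graph $G$ such that the matrix $M_G := \lambda I - A_G + \tfrac12 J$ is positive semidefinite and has rank at most $d$.

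For the bound $N_\alpha(d) \ge d$, take $G$ to be the empty graph on $d$ vertices. Then $M_G = \lambda I + \tfrac12 J$ has eigenvalues $\lambda + d/2$ (simple) and $\lambda$ (multiplicity $d-1$), both positive since $\lambda > 0$, so $M_G$ is PSD of rank exactly $d$.

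For the bound $N_\alpha(d) \ge \floor{k(d-1)/(k-1)}$, let $H$ be a graph witnessing $k(\lambda) = k$. By passing to a connected component (whose spectral radius still equals $\lambda$, since the spectral radius of a graph is the max over its components), we may assume $H$ is connected, and then minimality of $k$ forces $\abs{V(H)} = k$. Write $d - 1 = q(k-1) + r$ with $0 \le r \le k-2$, so that $N := \floor{k(d-1)/(k-1)} = qk + r$. Let $G$ be the disjoint union of $q$ copies of $H$ and $r$ isolated vertices. Since $\lambda > 0$, one has $\lambda_1(A_G) = \lambda$, and $\lambda$ is an eigenvalue of $A_G$ with multiplicity at least $q$ (once per copy of $H$). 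Hence $\lambda I - A_G$ is PSD with $\rank(\lambda I - A_G) \le N - q = q(k-1) + r = d-1$. Adding the PSD rank-$1$ matrix $\tfrac12 J = \tfrac12 \mathbf{1}\mathbf{1}^\top$ preserves positive semidefiniteness and increases the rank by at most $1$, so $\rank(M_G) \le d$, as required.

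There is no genuine obstacle here: the argument is a construction followed by a rank count. The only real design choice is padding the $q$ copies of $H$ with exactly $r$ isolated vertices. Each isolated vertex adds one to both $\abs{V(G)}$ and the rank of $\lambda I - A_G$ (contributing the eigenvalue $\lambda$), so this padding lets $\rank(\lambda I - A_G)$ land exactly at $d-1$, leaving just enough slack for the rank-$1$ perturbation $\tfrac12 J$ to keep $M_G$ within rank $d$ and match the floor expression in the stated bound.
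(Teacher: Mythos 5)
Your proof is correct and follows essentially the same route as the paper: both use \cref{lem:line-graph} with the empty graph for $N_\alpha(d)\ge d$, and with a disjoint union of $\floor{(d-1)/(k-1)}$ copies of a minimal $k$-vertex graph $H$ plus isolated vertices for the second bound, concluding via the same rank count after adding the rank-one PSD matrix $\tfrac12 J$. Your explicit justification that $H$ may be taken connected (and that the eigenvalue multiplicity of $\lambda$ is at least, rather than exactly, the number of copies) is a minor tightening of details the paper leaves implicit, not a different argument.
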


\begin{proof}
  Let $G$ be the empty graph on $d$ vertices, so that $A_G = 0$ and $\lambda I - A_G + \frac{1}{2} J$ is positive semidefinite and has rank $d$.
  So $N_\alpha(d) \ge d$ by \cref{lem:line-graph}.
  
  Now assume $k < \infty$. Let $H$ be a $k$-vertex graph with $\lambda_1(H) = \lambda$. Let $G$ be the disjoint union of $\floor{(d-1)/(k-1)}$ copies of $H$ along with $(d-1) - (k-1)\floor{(d-1)/(k-1)}$ isolated vertices. The number of vertices in $G$ is $(d-1) + \floor{(d-1)/(k-1)} = \floor{k(d-1)/(k - 1)}$.
  
  Since $\lambda$ is the spectral radius of $G$ and the multiplicity of $\lambda$ in $G$ is $\floor{(d-1)/(k-1)}$, the matrix $\lambda I - A_G$ is positive semidefinite and has rank $d-1$. Because $\frac{1}{2} J$ is also positive semidefinite and has rank $1$, their sum $\lambda I - A_G + \tfrac12 J$ is positive semidefinite and has rank at most $d$. By \cref{lem:line-graph}, $N_{\alpha}(d) \ge \floor{k(d-1)/(k-1)}$.
\end{proof}

We now prove the upper bounds in \cref{thm:main} assuming \cref{thm:switch,thm:eigen-mult}.

\begin{proof}[Proof of \cref{thm:main}]
  The lower bounds follow from \cref{prop:lower-bounds}. For the upper bounds, consider $N$ equiangular lines in $\RR^d$. By \cref{thm:switch}, there is some constant $\Delta = \Delta(\alpha)$ such that we can choose one unit vector in the direction of each line so that the associated graph (whose edges correspond to negative inner products) has maximum degree at most $\Delta$. Let $C_1,\dots,C_t$ be the connected components of $G$, numbered such that $\lambda_1(G) = \lambda_1(C_1)$.
  
  If $\lambda$ is not an eigenvalue of $A_G$, then $\lambda I - A_G$ has full rank. As $J$ has rank $1$,
  \[
  d \ge \rank (\lambda I - A_G + \tfrac{1}{2}J) \ge N-1.
  \]
  Thus $N \le d + 1$, and \cref{thm:main} clearly holds. Therefore we may assume that $\lambda$ is an eigenvalue of $A_G$.
  
  First consider the case $\lambda_1(G) = \lambda$. By the definition of spectral radius order $k = k(\lambda) < \infty$. Since both $\lambda I - A_G$ and $J$ are positive semidefinite,
  \[
  \ker(\lambda I - A_G + \tfrac{1}{2}J) = \ker(\lambda I - A_G) \cap \ker(J).
  \]
  By the Perron--Frobenius theorem, there is a top eigenvector of $G$ with nonnegative entries. This vector lies in $\ker(\lambda I - A_G)$ but not in $\ker(J)$, implying that $\dim\ker(\lambda I - A_G + \frac{1}{2}J) \le \dim\ker(\lambda I - A_G)-1$. By the rank--nullity theorem, we obtain
  \[
  \rank(\lambda I - A_G) \le \rank(\lambda I - A_G + \tfrac{1}{2}J) - 1 \le d-1.
  \]
  Without loss of generality, suppose $C_1,\dots,C_j$ are the components of $G$ with spectral radius exactly $\lambda$, and thus $\abs{C_1}, \dots, \abs{C_j} \ge k$ by the definition of spectral radius order. By the Perron--Frobenius theorem, the multiplicity of $\lambda$ in each component is at most $1$. Thus
  \[
  \dim\ker(\lambda I - A_G) = j\qquad\text{and}\qquad\rank(\lambda I - A_G) \ge (k - 1)j.
  \]
  Combining the upper and lower bounds on $\rank(\lambda I-A_G)$, we obtain $j \le (d-1)/(k-1)$. Thus,
  \[
  N = \rank(\lambda I - A_G) + \dim\ker(\lambda I - A_G) \le d - 1 + j\le \frac{k(d - 1)}{k - 1}.
  \]
  Therefore \cref{thm:main} holds in this case.
  
  Now we consider the complementary case $\lambda_1(C_1) > \lambda$. Since $\lambda I - A_G + \frac{1}{2}J$ is positive semidefinite and $J$ is a rank 1 matrix, $\lambda I-A_G$ has at most one negative eigenvalue. Thus $\lambda_2(G) \le \lambda$.
  
  We claim that this implies that the spectral radius of all the remaining components is strictly less than $\lambda$. By the Perron-Frobenius theorem, there are top eigenvectors $\bm u, \bm v$ for $C_1, C_i$ with nonnegative entries (positive in the component under consideration and 0 outside it). Since both $\bm 1^{\intercal}\bm u$ and $\bm 1^{\intercal}\bm v$ are positive, we can choose $c\neq 0$ such that $\bm w=\bm u-c\bm v$ satisfies $\bm 1^{\intercal}\bm w=0$. Now since $\lambda I-A_G+\tfrac12J$ is positive semidefinite, we have
  \[
  0 \le \bm w^{\intercal}(\lambda I-A_G+\tfrac12J)\bm w = \bm w^{\intercal}(\lambda I-A_G)\bm w.
  \]
  Expanding and using the fact that the supports of $\bm u$ and $\bm v$ are disconnected in $G$, we find
  \[
  \lambda \bm{u}^\intercal \bm{u} + c^2 \lambda \bm{v}^\intercal \bm{v}
  \ge \bm{u}^\intercal A_{G} \bm{u} + c^2 \bm{v}^\intercal A_{G} \bm{v}
  = \lambda_1(C_1) \bm{u}^\intercal \bm u + c^2 \lambda_1(C_i) \bm{v}^\intercal \bm v,
  \]
  implying that $\lambda_1(C_i)<\lambda$ for all $i>1$.
  Therefore $\lambda I - A_{C_i}$ is invertible for all $i > 1$, so $\dim\ker (\lambda I - A_G) = \dim\ker (\lambda I - A_{C_1}).$ Since $C_1$ has maximum degree at most $\Delta$, \cref{thm:eigen-mult} gives
  \[
  \dim\ker (\lambda I - A_{C_1}) = O_\Delta(\abs{C_1}/\log\log \abs{C_1}) = O_\Delta(N/\log\log N).
  \]
  Also,
  \[
  \rank(\lambda I - A_G) \le \rank(\lambda I - A_G + \tfrac{1}{2}J) + 1 \le d + 1.
  \]
  Thus
  \[
  N = \rank(\lambda I - A_G) + \dim\ker (\lambda I - A_G) \le O_\Delta(N/\log\log N) + d+1.
  \]
  This implies that $N \le d + O_\Delta(d/\log\log d)$. When $k < \infty$, this is smaller than $\floor{k(d-1)/(k-1)}$ for sufficiently large $d$.
\end{proof}

\section{Bounding eigenvalue multiplicity} \label{sec:eigen-mult}

In this section we prove \cref{thm:eigen-mult}, which bounds the $j$-th eigenvalue multiplicity of a connected bounded degree graph.

\begin{definition}
  The $r$-\emph{neighborhood} of a vertex $v$ in a graph $G$, denoted $G_r(v)$, is the subgraph of $G$ induced by all the vertices that are at most distance $r$ away from $v$. An $r$-\emph{net} in $G$ is a subset $V$ of the vertices such that all vertices in $G$ are within distance $r$ from some vertex in $V$.
\end{definition}

\begin{lemma} \label{lem:small-dominating}
  Let $n$ and $r$ be positive integers. Every $n$-vertex connected graph has an $r$-net with size at most $\ceil{n/(r+1)}$.
\end{lemma}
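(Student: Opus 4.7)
My plan is to first reduce to the case where the graph is a tree, and then prove the tree case by induction on $n$. For the reduction, let $T$ be any spanning tree of $G$; since $T$ is a subgraph of $G$ on the same vertex set, $d_T(u,v) \ge d_G(u,v)$ for every pair of vertices, so any subset that is an $r$-net of $T$ (in the tree metric) is automatically an $r$-net of $G$. It therefore suffices to prove the bound when $G$ itself is a tree.

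If $T$ has a vertex of eccentricity at most $r$, which in particular holds whenever $n \le r+1$, then a single vertex is an $r$-net and $1 \le \ceil{n/(r+1)}$. Otherwise, root $T$ at an arbitrary vertex $\rho$, let $v$ be a deepest leaf in this rooting with depth $D > r$, and let $u$ be the ancestor of $v$ at depth $D - r$. Let $T_u$ denote the subtree consisting of $u$ together with all of its descendants. The point of this choice is two-fold: every descendant of $u$ has depth in $[D-r, D]$ by the maximality of $D$, and hence lies at distance at most $r$ from $u$; and $T_u$ contains the entire path from $u$ down to $v$, so $|T_u| \ge r+1$.

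Now set $T' = T \setminus V(T_u)$. Removing the pendant subtree $T_u$ leaves a tree, which is nonempty since $u \ne \rho$ (as $u$ has depth $\ge 1$), so $T'$ is a connected tree on at most $n - r - 1$ vertices, and moreover $d_{T'}(a,b) = d_T(a,b)$ for $a,b \in T'$. By the induction hypothesis, $T'$ admits an $r$-net $S'$ of size at most $\ceil{(n-r-1)/(r+1)}$, and then $S' \cup \{u\}$ $r$-dominates $T$ (with $u$ handling $T_u$ and $S'$ handling $T'$) with size at most $\ceil{(n-r-1)/(r+1)} + 1 = \ceil{n/(r+1)}$, where the last equality is the standard identity $\ceil{(m-(r+1))/(r+1)} + 1 = \ceil{m/(r+1)}$.

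There is no serious obstacle here; the only subtle point is choosing $u$ appropriately. Taking $u$ to be the ancestor at depth exactly $D-r$ of a deepest leaf simultaneously guarantees that $T_u$ is $r$-dominated by $u$ alone and that $|T_u| \ge r+1$, which is precisely the trade-off required for the inductive arithmetic to close.
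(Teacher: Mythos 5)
Your proof is correct and takes essentially the same approach as the paper's: reduce to a spanning tree, take a deepest vertex $v$ from an arbitrary root, add to the net its ancestor $u$ at distance $r$ from $v$, observe that the pendant subtree rooted at $u$ has at least $r+1$ vertices and is entirely covered by $u$, and recurse on the remaining tree. The paper phrases this as an iterated greedy step rather than a formal induction with the ceiling identity, but the decomposition and the arithmetic are identical.
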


\begin{proof}
  It suffices to prove the lemma in the case where $G$ is a tree. Pick an arbitrary vertex $w$. Take a vertex $v$ at the maximum distance $D$ from $w$. If $D \le r$, then $\{w\}$ is an $r$-net. Otherwise, let $u$ be the vertex on the path between $w$ and $v$ at distance $r$ from $v$. Add $u$ to the net and repeat the argument on the component of $w$ in $G - u$, which has at most $n - r - 1$ vertices.
\end{proof}

The next lemma tells us that removing an $r$-net from a graph significantly decreases its spectral radius.

\begin{lemma} \label{lem:reduce-eigen}
  Let $r$ be a positive integer. If $H$ (with at least 1 vertex) is obtained from a graph $G$ by deleting an $r$-net of $G$, then \[\lambda_1(H)^{2r} \le \lambda_1(G)^{2r} - 1.\]
\end{lemma}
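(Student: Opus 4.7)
The plan is to bound $\lambda_1(G)^{2r}$ from below by the Rayleigh quotient of $A_G^{2r}$ applied to a test vector built from $H$, then to extract the extra $+1$ combinatorially by exhibiting, at each vertex of $V(H)$, a new closed walk of length $2r$ in $G$ that exits $V(H)$.

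First I would take a unit eigenvector $\bm x\in\RR^{V(H)}$ of $A_H$ with eigenvalue $\lambda_1(H)$, chosen nonnegative (apply Perron--Frobenius to a top component of $H$ and pad by zero on the others), and extend $\bm x$ to $\bm y\in\RR^{V(G)}$ by zero outside $V(H)$. Since $A_G$ is nonnegative symmetric, Perron--Frobenius gives $\lambda_1(A_G^{2r})=\lambda_1(G)^{2r}$, so the Rayleigh principle yields $\lambda_1(G)^{2r}\ge \bm y^\intercal A_G^{2r}\bm y$. Interpreting $(A^{2r})_{vw}$ as the count of length-$2r$ walks from $v$ to $w$, every walk in $H$ is a walk in $G$, so $(A_G^{2r})_{vw}\ge (A_H^{2r})_{vw}$ for $v,w\in V(H)$; weighting by the nonnegative $x_vx_w$ integrates this to $\bm y^\intercal A_G^{2r}\bm y\ge \bm x^\intercal A_H^{2r}\bm x=\lambda_1(H)^{2r}$. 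It then suffices to prove $(A_G^{2r})_{vv}\ge(A_H^{2r})_{vv}+1$ for every $v\in V(H)$, since the excess diagonal contribution then sums to $\sum_v x_v^2=1$.

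To produce such an extra closed $v$-walk of length $2r$ in $G$, I would use the $r$-net hypothesis to find a net vertex $u$ at distance $s\le r$ from $v$ in $G$, walk along a shortest $v$--$u$ path and reverse it (a closed walk of length $2s$ visiting $u\notin V(H)$), and then bounce on an edge $vv'$ at $v$ for the remaining $2(r-s)$ steps; the existence of $v'$ follows because $v\in V(H)$ cannot be isolated in $G$ (otherwise $v$ would lie at infinite distance from the net, forcing $v$ itself into the net). The main delicate point is this construction: the target inequality has no slack, so the walk must have length \emph{exactly} $2r$ and must demonstrably leave $V(H)$. Both requirements are secured by the $r$-net hypothesis (reaching $u$ in $\le r$ steps) and by $v$ having an incident edge to pad the walk to the correct parity and length.
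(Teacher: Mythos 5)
Your proof is correct and takes essentially the same approach as the paper: the key step --- producing an extra closed walk of length exactly $2r$ at each vertex of $H$ by walking to a nearest net vertex and back, then bouncing on an incident edge --- is precisely the paper's argument. The only difference is cosmetic: the paper records this as the entrywise inequality $A_H^{2r} \le A_G^{2r} - I$ and invokes monotonicity of the spectral radius of nonnegative matrices, whereas you make that final spectral comparison explicit via a Rayleigh quotient with the zero-extended Perron eigenvector of $H$.
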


\begin{proof}
  It suffices to prove the lemma in the case where $G$ has no isolated vertices. The result then follows from the Perron--Frobenius theorem and the observation that $A_H^{2r} \le A_G^{2r} - I$ entry-wise (padding zeros to extend $A_H$ to a $\abs{G} \times \abs{G}$ matrix). Indeed, for each vertex $v$ of $H$, the number of closed walks of length $2r$ starting from $v$ is strictly more in $G$ than in $H$, since in $G$ one can walk to a nearest vertex in the $r$-net and then walk back (and then walking back and forth along a single edge to reach length $2r$) and this walk is not available in $H$.
\end{proof}

The next lemma connects the spectrum of a graph with its local spectral radii.

\begin{lemma} \label{lem:eigen-bound}
  For every graph $G$ and positive integer $r$,
  \[
  \sum_{i=1}^{\abs{G}} \lambda_i(G)^{2r} \le \sum_{v\in V(G)} \lambda_1(G_r(v))^{2r}.
  \]
\end{lemma}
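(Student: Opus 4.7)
The plan is to interpret both sides in terms of closed walks and then localize the walk-counting to small neighborhoods. The starting identity is
\[
\sum_{i=1}^{|G|} \lambda_i(G)^{2r} = \mathrm{tr}(A_G^{2r}) = \sum_{v \in V(G)} \bigl(A_G^{2r}\bigr)_{vv},
\]
where $(A_G^{2r})_{vv}$ is the number of closed walks of length $2r$ in $G$ starting and ending at $v$. So the task reduces to bounding, for each $v$ individually, the count of such closed walks by $\lambda_1(G_r(v))^{2r}$.

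The first key observation is that every closed walk of length $2r$ starting at $v$ stays entirely inside $G_r(v)$: at time $t$ the walker is at distance at most $\min(t, 2r-t) \le r$ from $v$. Consequently the walk lives in $G_r(v)$, and the count $(A_G^{2r})_{vv}$ is unchanged if we replace $A_G$ by $A_{G_r(v)}$, i.e., $(A_G^{2r})_{vv} = (A_{G_r(v)}^{2r})_{vv}$.

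The second step is a spectral bound on this diagonal entry. Write an orthonormal eigenbasis $u_1, \dots, u_{|G_r(v)|}$ of $A_{G_r(v)}$ with corresponding eigenvalues $\mu_1, \dots, \mu_{|G_r(v)|}$; then
\[
\bigl(A_{G_r(v)}^{2r}\bigr)_{vv} = \sum_i \mu_i^{2r}\, u_i(v)^2.
\]
Since $A_{G_r(v)}$ has nonnegative entries, Perron--Frobenius gives $|\mu_i| \le \lambda_1(G_r(v))$, hence $\mu_i^{2r} \le \lambda_1(G_r(v))^{2r}$. Also $\sum_i u_i(v)^2 = 1$ because the $u_i$ form an orthonormal basis. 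Therefore $(A_{G_r(v)}^{2r})_{vv} \le \lambda_1(G_r(v))^{2r}$. Summing over $v$ yields the desired inequality.

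There is no real obstacle here; the only thing to be careful about is the localization step, where one must confirm that a length-$2r$ closed walk from $v$ truly cannot escape $G_r(v)$ (using $\min(t,2r-t) \le r$), and the use of Perron--Frobenius to compare $|\mu_i|$ against $\lambda_1(G_r(v))$.
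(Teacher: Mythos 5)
Your proposal is correct and follows essentially the same argument as the paper: both interpret $\sum_i \lambda_i(G)^{2r} = \operatorname{tr}(A_G^{2r})$ as a count of closed walks, observe that a closed walk of length $2r$ from $v$ cannot leave $G_r(v)$, and bound the resulting diagonal entry $\bm 1_v^\intercal A_{G_r(v)}^{2r} \bm 1_v$ by $\lambda_1(G_r(v))^{2r}$. Your write-up merely makes explicit (via the orthonormal eigenbasis expansion and Perron--Frobenius) the final spectral estimate that the paper states in one line.
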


\begin{proof}
  The left-hand side counts the number of closed walks of length $2r$ in $G$. The number of such walks starting at $v \in V(G)$ is $\bm 1_v^\intercal A_{G_r(v)}^{2r} \bm 1_v$ since such a walk must stay within distance $r$ from $v$. This quantity is upper bounded by $\lambda_1(G_r(v))^{2r}$, completing the proof.
\end{proof}

\begin{proof}[Proof of \cref{thm:eigen-mult}]
  Let $G$ be a connected $n$-vertex graph with maximum degree at most $\Delta$. If $\lambda_j(G) \le 0$, the theorem holds as the graph has bounded size. Indeed, in this case,
  \[
  2\abs{E(G)} = \sum_{i=1}^n\lambda_i(G)^2
  \le \sum_{i=1}^{j-1}\lambda_i(G)^2+\left(\sum_{i=j}^n\lambda_{i}(G)\right)^2=\sum_{i = 1}^{j - 1}\lambda_i(G)^2 + \left(\sum_{i = 1}^{j - 1}\lambda_i(G)\right)^2 \le j^2\Delta^2.
  \]
  
  Now suppose $\lambda =\lambda_j(G)  > 0$. Let $r_1 = \floor{c \log\log n}$ and $r_2 = \floor{c\log n}$ where $c = c(\Delta,j) > 0$ is a sufficiently small constant. Let $r = r_1+r_2$.
  
  Define $U = \{v \in V(G) : \lambda_1(G_r(v)) > \lambda\}$. We wish to bound the size of $U$. Let $U_0$ be a maximal subset of $U$ such that the pairwise distance (in $G$) between any two elements of $U_0$ is at least $2(r+1)$. 
  Then the graph $G_r(U_0)$ induced by the $r$-neighborhood of $U_0$ has $\abs{U_0}$ connected components each with spectral radius greater than $\lambda$.
  Hence $\lambda_{\abs{U_0}}(G_r(U_0)) > \lambda = \lambda_j(G)$, 
  	and thus $\abs{U_0} < j$ by the Cauchy interlacing theorem.
  Due to the maximality of $U_0$, its $2(r+1)$-neighborhood contains $U$, 
  	and hence $\abs{U} \le \abs{U_0} \Delta^{2(r+1)} < j\Delta^{2(r+1)}$.
  
  Let $V_0$ be an $r_1$-net of size at most $\ceil{n/(r_1 + 1)}$ in $G$ obtained from \cref{lem:small-dominating}.
  Let $H$ be the graph obtained from $G$ after removing $V_0 \cup U$.
  For each $v \in V(H)$, the vertices in $G_r(v)$ not in $H_{r_2}(v)$ form an $r_1$-net of $G_r(v)$, and hence by \cref{lem:reduce-eigen}, $\lambda_1(H_{r_2}(v))^{2r_1} \le \lambda_1(G_r(v))^{2r_1} - 1 \le \lambda^{2r_1} - 1$. By \cref{lem:eigen-bound},
  \[
  \sum_{i=1}^{\abs{H}} \lambda_i(H)^{2r_2} \le \sum_{v\in V(H)} \lambda_1(H_{r_2}(v))^{2r_2} \le \left(\lambda^{2r_1} - 1\right)^{r_2/r_1} n.
  \]
  Hence the multiplicity of $\lambda$ in $H$ is at most
  \[
  \left(1 - \lambda^{-2r_1}\right)^{r_2/r_1} n
  \le e^{-r_2\lambda^{-2r_1}/r_1} n
  \le e^{-\sqrt{\log n}} n,
  \]
  provided that $c$ is chosen to be small enough initially (here we note that $\lambda \le \lambda_1(G) \le \Delta$).  
  Since $\abs{V_0} + \abs{U} \le \ceil{n/(r_1 + 1)} + j\Delta^{2(r+1)} = O_{j,\Delta}(n/\log\log n)$,
  the Cauchy interlacing theorem implies that the multiplicity of $\lambda$ in $G$ is at most $O_{j,\Delta}(n / \log\log n)$.
\end{proof}

\begin{remark}
  \cref{thm:eigen-mult} fails for disconnected graphs since $\lambda_j(G)$ can be the spectral radius of many identical small components. 
  
  It seems likely that the upper bound can be further improved. It cannot be improved beyond $O(n^{1/3})$ due to the following construction: let $p \ge 5$ be a prime and $G$ the Cayley graph of $\PSL(2,p)$ with two standard group generators. Then $G$ is a connected 4-regular graph on $p(p^2-1)/2$ vertices. Since all non-trivial representations of $\PSL(2,p)$ have dimension at least $(p-1)/2$, all eigenvalues of $G$ except $\lambda_1(G)$ have multiplicity at least $(p-1)/2$ (see~\cite{DSV03}). More generally, one can use quasirandom groups~\cite{Gow08}, which are groups with no small irreducible non-trivial representations.
  
  The claim is false without the maximum degree hypothesis. Paley graphs have $p$ vertices and second eigenvalue $(\sqrt{p}-1)/2$ with multiplicity $(p-1)/2$. Other strongly regular graphs and distance-regular graphs have similar properties.
\end{remark}

\section{Switching to a bounded degree graph} \label{sec:switch}

It remains to prove \cref{thm:switch}, which says that one can choose the unit vectors for the equiangular lines so that the associated graph $G$ has bounded degree. Recall that the edges of $G$ correspond to pairs of unit vectors with inner product $-\alpha$. This argument essentially appears in \cite{BDKS18} though phrased differently. Here we give a self-contained and streamlined proof. 

We begin by using the positive semidefiniteness of the Gram matrix to show that certain induced subgraphs cannot appear in $G$.  

\begin{lemma} \label{lem:bounded-clique}
  Let $\alpha \in (0, 1)$. Let $G$ be the associated graph of a set of unit vectors with pairwise inner products $\pm \alpha$. Then the largest clique in $G$ has size at most $\alpha^{-1} + 1$.
\end{lemma}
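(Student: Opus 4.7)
The plan is to translate a clique in $G$ back into a configuration of unit vectors and then exploit the nonnegativity of a single squared norm. A clique of size $k$ in $G$ corresponds, by definition of the associated graph, to unit vectors $v_1, \dots, v_k$ with $\langle v_i, v_j \rangle = -\alpha$ for all $i \ne j$.

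First I would compute the squared norm of the sum $v_1 + \cdots + v_k$ by expanding:
\[
0 \le \norm{v_1 + \cdots + v_k}^2 = \sum_{i=1}^k \norm{v_i}^2 + \sum_{i \ne j} \ang{v_i, v_j} = k - k(k-1)\alpha = k\bigl(1 - (k-1)\alpha\bigr).
\]
Since $k \ge 1$, this forces $1 - (k-1)\alpha \ge 0$, i.e., $k \le \alpha^{-1} + 1$, as desired.

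There is essentially no obstacle here: the argument is a one-line application of the fact that the Gram matrix of vectors is positive semidefinite (equivalently, that squared norms are nonnegative), applied to the all-ones vector in the span of the clique. This is of course the prototype of the more delicate ``test vector'' arguments used elsewhere in the paper to rule out forbidden induced subgraphs, and it simply records the easiest such instance.
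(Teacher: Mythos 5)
Your proof is correct and is essentially identical to the paper's own argument: both expand $0 \le \norm{v_1 + \cdots + v_k}^2 = k - k(k-1)\alpha$ and conclude $k \le \alpha^{-1} + 1$. Nothing further is needed.
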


\begin{proof}
  Let $v_1, \dots, v_M$ be unit vectors corresponding to a clique in $G$, so that $\ang{v_i, v_j} = -\alpha$ for $i \ne j$. Then $0 \le \norm{v_1 + \cdots + v_M}_2^2 = M - M(M-1)\alpha$. Hence $M \le \alpha^{-1} + 1$.
\end{proof}

\begin{definition} \label{def:C_X}
  For a graph $G$ and sets $A\subseteq X\subseteq V(G)$, define $C_X(A)$ to be the set of vertices in $V(G)\setminus X$ that are adjacent to all vertices in $A$ and not adjacent to any vertices in $X\setminus A$.
\end{definition}

\begin{lemma}\label{prop:independent}
  Let $\alpha \in (0, 1)$ and $\lambda = (1-\alpha)/(2\alpha)$. There exist positive integers $M_1, M_2$ depending only on $\alpha$ such that the following holds. Let $G$ be the associated graph of a set of unit vectors with pairwise inner products $\pm \alpha$. If $X$ is an independent set of $G$ with at least $M_1$ vertices, then
  \begin{enumerate}[(a)]
    \item the maximum degree of the subgraph of $G$ induced by $C_X(\varnothing)$ (i.e., the non-neighbors of $X$) is at most $\lceil\lambda^2\rceil$, and
    \item $\abs{C_X(Y)} \le M_2$ for every nonempty proper subset $Y$ of $X$.
  \end{enumerate}
\end{lemma}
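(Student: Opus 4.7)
The plan is to project each $v \in C_X(Y)$ onto the span of $X$ and then exploit positive semidefiniteness via the residual vectors. Identify $X = \{x_1, \dots, x_n\}$ with its corresponding unit vectors; since $X$ is independent, the Gram matrix of $X$ is $G_X = (1-\alpha)I + \alpha J$, invertible for $\alpha \in (0,1)$. For every $v \in C_X(Y)$ the vector of inner products $\beta = (\ang{v, x_i})_i$ equals $\alpha \bm 1 - 2\alpha\bm 1_Y$, determined solely by $Y$, so the orthogonal projection $\Pi v$ of $v$ onto the span of $X$ is the same for every $v \in C_X(Y)$, with $\rho := \|\Pi v\|^2 = \beta^\intercal G_X^{-1}\beta$ depending only on $m := \abs{Y}$ and $n$. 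Consequently the residuals $v - \Pi v$ all share squared norm $1 - \rho$ and have pairwise inner products
\[
\ang{v - \Pi v,\, w - \Pi w} = \ang{v, w} - \rho \in \{\alpha - \rho,\, -\alpha - \rho\}.
\]
A direct calculation with the rank-one form of $G_X^{-1}$ yields
\[
\rho - \alpha = \frac{\alpha^3\bigl(4m(n - m) - 1\bigr) - \alpha(1 - 2\alpha)}{(1-\alpha)\bigl(1 + (n-1)\alpha\bigr)}.
\]

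For part (b), the inequality $4m(n - m) \ge 4(n - 1)$ valid for $1 \le m \le n - 1$ ensures that once $n \ge M_1 = M_1(\alpha)$, the right-hand side above is positive and bounded below by a constant $\delta(\alpha) > 0$. Hence $\rho > \alpha$ uniformly in $m$. Normalizing $\tilde v := (v - \Pi v)/\sqrt{1-\rho}$, the unit vectors $\{\tilde v : v \in C_X(Y)\}$ have every pairwise inner product at most $\gamma_+ := (\alpha - \rho)/(1 - \rho) < 0$, with $-\gamma_+ \ge \delta(\alpha)/(1 - \alpha)$; the standard bound $0 \le \bigl\|\sum_{v} \tilde v\bigr\|^2$ then gives $\abs{C_X(Y)} \le 1 + 1/(-\gamma_+) = O_\alpha(1)$, providing $M_2$. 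The edge case $\rho \ge 1$ occurs only when no such $v$ exists, giving $\abs{C_X(Y)} \le 1$ trivially.

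For part (a), specialize to $Y = \varnothing$: then $\rho_0 := \rho = n\alpha^2/(1 + (n - 1)\alpha) = \alpha - \alpha(1-\alpha)/(1 + (n-1)\alpha)$, tending to $\alpha$ from below. The normalized residuals $\tilde v$ of $v \in C_X(\varnothing)$ are unit vectors with pairwise inner products in $\{\gamma_+, \gamma_-\}$, where $\gamma_+ := (\alpha - \rho_0)/(1 - \rho_0) \to 0^+$ and $\gamma_- := -(\alpha + \rho_0)/(1 - \rho_0) \to -1/\lambda$, and edges in $G|_{C_X(\varnothing)}$ correspond precisely to $\gamma_-$. Fix $v \in C_X(\varnothing)$ with neighbors $w_1, \dots, w_k \in C_X(\varnothing)$; since $\ang{\tilde v, \tilde w_i} = \gamma_-$ for every $i$, projecting each $\tilde w_i$ orthogonally off $\tilde v$ produces $\tilde w_i' := \tilde w_i - \gamma_- \tilde v$ with $\|\tilde w_i'\|^2 = 1 - \gamma_-^2$ and $\ang{\tilde w_i', \tilde w_j'} = \ang{\tilde w_i, \tilde w_j} - \gamma_-^2 \le \gamma_+ - \gamma_-^2$, the latter tending to $-1/\lambda^2 < 0$. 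Expanding $0 \le \bigl\|\sum_i \tilde w_i'\bigr\|^2$ and dividing by $k$ gives $k - 1 \le (1 - \gamma_-^2)/(\gamma_-^2 - \gamma_+)$, whose limit is $\lambda^2 - 1$. Hence $k \le \lambda^2 + o(1)$, and integrality of $k$ forces $k \le \lceil \lambda^2 \rceil$ once $M_1$ is large enough.

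The main technical obstacle is the uniform control of the error terms: for (b), the uniformity in $m$ (so that $M_2$ depends only on $\alpha$) rides on the estimate $4m(n - m) \ge 4(n - 1)$; for (a), the $o(1)$ error in the bound on $k$ must be strictly less than $1$ so that integer rounding correctly delivers $\lceil \lambda^2 \rceil$. Both requirements are met by taking $M_1$ sufficiently large as a function of $\alpha$ alone.
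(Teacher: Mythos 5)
Your proof is correct, but it takes a genuinely different route from the paper's. You project each vector of $C_X(Y)$ orthogonally onto $\operatorname{span}(X)$, observe that all these projections coincide (since the inner-product pattern with $X$ is determined by $Y$), and then analyze the Gram matrix of the normalized residuals: for (b) the residuals have pairwise inner products at most $-\delta(\alpha)$ once $\abs{X}$ is large (using $m(n-m)\ge n-1$, and your formula for $\rho-\alpha$ checks out: the numerator equals $4\alpha^3 m(n-m)-\alpha(1-\alpha)^2$), so the classical bound $\abs{C_X(Y)}\le 1+1/\delta$ applies; for (a) the residual inner products tend to $\{0,-1/\lambda\}$, and a second projection off $\tilde v$ plus the same quadratic-form expansion gives degree at most $\lambda^2+o(1)$, which integrality rounds to $\lceil\lambda^2\rceil$. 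This is precisely the projection technique of the earlier works \cite{Bukh16, BDKS18, JP20} that the paper deliberately avoids: as stated in \cref{sec:sketch}, the authors ``do not take projections, and instead simply verify the forbidden induced configurations by testing positive semidefiniteness using appropriately chosen vectors.'' Concretely, for (a) the paper tests $\lambda I - A_G + \tfrac12 J$ against a weighted star vector (weight $\sqrt D$ at the center, $1$ at the leaves) with compensating negative weights $-(D+\sqrt D)/\abs{X}$ spread over $X$ so that $J\bm v = 0$, and for (b) it uses a three-level test vector with an optimized middle weight and a discriminant computation. The paper's method buys brevity and self-containment: no inversion of the Gram matrix of $X$, no normalization, no edge cases (your $\rho\ge 1$ case), and no limit/uniformity bookkeeping, since the contradiction $\lambda < \sqrt D$ is immediate. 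Your method buys geometric transparency and slightly sharper information: it shows the induced subgraph on $C_X(\varnothing)$ asymptotically behaves like a system of unit vectors with inner products $\{0,-1/\lambda\}$, and when $\lambda^2\notin\mathbb{Z}$ it yields the stronger degree bound $\lfloor\lambda^2\rfloor$. One point worth making explicit in your write-up: the inner products of unit vectors lie in $[-1,1]$, so when $\lambda<1$ your limiting value $\gamma_-\to-1/\lambda<-1$ forces $k=0$; your inequality $k-1\le(1-\gamma_-^2)/(\gamma_-^2-\gamma_+)$ does handle this (the right side is negative), but the reader should see that no contradiction with positive semidefiniteness is being silently used.
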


\begin{proof}
  (a) Assume for contradiction that there exists a star $K_{1,D}$ in $C_X(\varnothing)$ with vertex set $V_1$ where $D = \lceil\lambda^2\rceil + 1$. Consider the vector $\bm v$ that assigns $\sqrt{D}$ to the center of the star, $1$ to all other vertices in $V_1$, $-(D + \sqrt{D})/\abs{X}$ to all vertices in $X$, and 0 to all other vertices of $G$. We have
  \[
  \bm v^\intercal\left(\lambda I - A_G + \tfrac{1}{2}J\right)\bm v \ge 0
  \]
  due to positive semidefiniteness. Since $J\bm v = 0$,
  \[
  0\le\lambda(\bm v^\intercal \bm v) - \bm v^\intercal A_G \bm v \le \lambda\left(2D + \frac{(D + \sqrt{D})^2}{\abs{X}}\right) - 2D\sqrt{D}.
  \]
  As $\lambda < \sqrt{D}$, this gives a contradiction when $\abs{X} \ge M_1$ is sufficiently large.
  
  (b) Write $a = \abs{Y}$, $b = \abs{X\setminus Y}$, and $c = \abs{C_X(Y)}$. For any real numbers $\alpha,\beta,\gamma$, we consider the vector $\bm v$ that assigns $\alpha$ to the vertices in $Y$, $\beta$ to the vertices in $X\setminus Y$,  $\gamma$ to the vertices in $C_X(Y)$, and 0 to all other vertices. We have
  \[
  0\le\bm v^\intercal\left(\lambda I - A_G + \tfrac{1}{2}J\right)\bm v \le\lambda(a\alpha^2 +b\beta^2 + c\gamma^2) - 2ac\alpha\gamma + \tfrac{1}{2}(a\alpha + b\beta + c\gamma)^2
  \]
  for all real $\alpha,\beta,\gamma$. Taking $\beta = -(a\alpha + c\gamma)/(b + 2\lambda)$, we obtain
  \[
  \lambda(a\alpha^2  + c\gamma^2) - 2ac\alpha\gamma + \frac{\lambda}{b + 2\lambda}(a\alpha + c\gamma)^2 \ge 0
  \]
  for all real numbers $\alpha$ and $\gamma$. This is a quadratic form in $\alpha$ and $\gamma$. For it to take nonnegative values its discriminant must be nonpositive. Thus
  \[
  4\frac{(b+\lambda)^2}{(b+2\lambda)^2}a^2c^2 - 4\left(\lambda a + \frac{\lambda a^2}{b+2\lambda}\right)\left(\lambda c + \frac{\lambda c^2}{b+2\lambda}\right)\le 0,
  \]
  which simplifies to
  \[
  (b+\lambda)^2ac \le (\lambda a + \lambda b + 2\lambda^2)(\lambda c + \lambda b + 2\lambda^2).
  \]
  Rearranging the inequality gives
  \[
  c \le \frac{\lambda^2(a + b + 2\lambda)}{ab - \lambda^2}.
  \]
  Since $a,b$ are positive integers, we have the easy bound $ab\ge a+b-1$. Recalling that $a + b = \abs{X} \ge M_1$, we can take $M_1\ge 2\lambda^2+2$ to give the somewhat crude bound
  \[
  c \le \frac{\lambda^2(a + b + 2\lambda)}{ab - \lambda^2} \le \frac{\lambda^2(a + b + 2\lambda)}{a+b-(\lambda^2+1)} \le \frac{2\lambda^2(a + b + 2\lambda)}{a+b} = 2\lambda^2+\frac{4\lambda^3}{a+b}\le 2\lambda^2 +2\lambda.
  \]
  Choosing $M_1,M_2$ appropriately, we conclude $\abs{C_X(Y)} = c \le M_2$, as desired.
\end{proof}

\begin{proof}[Proof of \cref{thm:switch}]
  For a set of $N$ equiangular lines in $\RR^d$ with common angle $\arccos\alpha$, choose unit vectors $v_1,\dots,v_N$ in the directions of the lines arbitrarily. Let $G$ be the associated graph, whose vertex set is $V=\{v_1,\dots,v_N\}$ with an edge between two vectors if their inner product is $-\alpha$.
  
  Let $M_0 = \lceil\alpha^{-1}\rceil+2$ and define $M_1, M_2$ as in \cref{prop:independent}. By Ramsey's theorem, there exists $R = R(M_0, 2M_1)$ such that if $\abs{V} > R$, then $G$ contains either a clique of size $M_0$ or an independent set of size $2M_1$. As long as we choose $\Delta\ge R$, the result is trivially true for $\abs{V} \le R$. Thus we may assume that $\abs{V} > R$. By \cref{lem:bounded-clique}, $G$ does not contain a clique of size $M_0$. Thus $G$ must contain an independent set of size 2$M_1$, which we call $V_1$.
  
  We perform the following switching operation, modifying our set of vectors $\{v_1,\dots,v_N\}$. For any vertex $v_i\not\in V_1$ adjacent to more than half of the vertices in $V_1$, replace $v_i$ by $-v_i$.
  
  Considering how each vertex in $V \setminus V_1$ is attached to $V_1$, the set $V\setminus V_1$ can be partitioned as a disjoint union of $C_{V_1}(Y)$ as $Y$ ranges over all subsets of $V_1$ with at most $\abs{V_1}/2$ elements, since the above switching step ensures that $C_{V_1}(Y)$ is empty for $\abs{Y}>\abs{V_1}/2$. By \cref{prop:independent}(b), $\abs{C_{V_1}(Y)} \le M_2$ for each $Y\neq\varnothing$. Let $V_2=C_{V_1}(\varnothing)$, the non-neighbors of $V_1$. We know that $\abs{V\setminus V_2} \le M:= 2M_1+2^{2M_1}M_2$.
  
  It remains to bound the degree of vertices in $V$. If $v\in V_1$, then $v$ is only adjacent to vertices in $V\setminus V_2$ and thus has degree at most $M$. Now suppose $v\not\in V_1$. Let $Y$ be the set of non-neighbors of $v$ in $V_1$. The switching ensures that $\abs{Y} \ge \abs{V_1} / 2 = M_1$. Applying \cref{prop:independent}(a), the maximum degree of the subgraph induced by $C_{Y}(\varnothing)$ is at most $\lceil\lambda^2\rceil$. This set $C_{Y}(\varnothing)$ includes $V_2$ and $v$, implying that $v$ has degree at most $D := \ceil{\lambda^2} + M$. Thus we have bounded the degree of every vertex by $D$, a constant depending only on $\alpha$.
\end{proof}

\section{Further remarks}

Our main theorem completely determines $N_\alpha(d)$ for sufficiently large $d$ in the case $k(\lambda) < \infty$. However, it is still open what happens exactly when $k(\lambda) = \infty$. The construction in \cref{prop:lower-bounds} only gives a lower bound $N_\alpha(d) \ge d$, whereas the proof of \cref{thm:main} shows $N_\alpha(d) = d +O_\alpha\left(d/\log\log d\right)$. The following conjecture was made in \cite{JP20} and has been verified except when $\lambda$ is a totally real algebraic integer that is largest among its conjugates \cite[Propositions 15 and 23]{JP20}.

\begin{conjecture}\label{conj:const-error}
  Fix $\alpha \in (0,1)$, and let $\lambda = (1-\alpha)/(2\alpha)$. If $k(\lambda) = \infty$, then $N_\alpha(d) = d + O_{\alpha}(1)$.
\end{conjecture}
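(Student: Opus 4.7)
The plan is to strengthen case (iii) in the proof of \cref{thm:main}, where $\lambda_1(C_1) > \lambda$. That proof already yields $N_\alpha(d) \le d + 1 + \dim\ker(\lambda I - A_{C_1})$, so it suffices to show $\dim\ker(\lambda I - A_{C_1}) = O_\alpha(1)$. By \cite[Propositions 15 and 23]{JP20}, the conjecture is already known unless $\lambda$ is a totally real algebraic integer maximal among its Galois conjugates, so I would reduce to that case; otherwise $\lambda$ cannot occur as an eigenvalue of the integer symmetric matrix $A_{C_1}$ at all, and $\dim\ker(\lambda I - A_{C_1}) = 0$.

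The first ingredient I would exploit is an algebraic rigidity available in this remaining case. Let $\mu := \dim\ker(\lambda I - A_{C_1})$ and let $p(x) \in \mathbb{Z}[x]$ be the minimal polynomial of $\lambda$, of degree $m$. Since $A_{C_1}$ is integer valued, every Galois conjugate of $\lambda$ is also an eigenvalue of $A_{C_1}$ with the same multiplicity $\mu$; consequently $\rank p(A_{C_1}) \le \abs{C_1} - m\mu$. If $\mu$ were unbounded as $d \to \infty$, the null space of $p(A_{C_1})$ would be correspondingly large, and the hope is that this global obstruction is incompatible with the local structure forced by the equiangular lines setting.

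The key bridge I would attempt is a refined local-to-global argument in the spirit of \cref{sec:eigen-mult}, tracking the polynomial $p$ rather than raw powers of $A$. Estimating $\operatorname{tr} p(A_{C_1})^2$ in two ways --- globally, using that its kernel has dimension at least $m\mu$, and locally, via walks of length $2m$ within $r$-neighborhoods for growing $r$ --- one would try to pass to a finite induced subgraph of $C_1$ whose spectral radius is exactly $\lambda$, directly contradicting $k(\lambda) = \infty$. For such an extraction to actually produce an \emph{equality} in the spectral radius rather than a near-equality, one would likely need to combine the walk-counting with the positive semidefiniteness of $\lambda I - A_G + \tfrac12 J$ and the forbidden-subgraph analysis of \cref{sec:switch} in order to rule out the approximate near-$\lambda$ configurations exemplified by the Cayley graph constructions.

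The main obstacle is precisely the gap between the sublinear multiplicity bound of \cref{thm:eigen-mult} and a true $O_\alpha(1)$ bound. The $\PSL(2,p)$ examples recorded after \cref{thm:eigen-mult} show that no argument using only the bounded-degree hypothesis can achieve $O(1)$, so any proof of \cref{conj:const-error} must genuinely fuse the arithmetic rigidity of $\lambda$ with the equiangular-lines-specific constraints (positive semidefiniteness, switching, forbidden induced subgraphs). Carrying out this synthesis --- and in particular converting a large approximate kernel of $p(A_{C_1})$ into a genuine realization of $\lambda$ as a spectral radius --- is the step I expect to be hardest, and is the reason the conjecture remains open.
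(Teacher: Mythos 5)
The statement you were asked to prove is \cref{conj:const-error}, which the paper does \emph{not} prove --- it is explicitly stated as an open conjecture (originating in \cite{JP20}), and the paper's own results only give $N_\alpha(d) = d + O_\alpha(d/\log\log d)$ in the case $k(\lambda) = \infty$. So there is no proof in the paper to compare against, and your proposal, by its own admission, is not a proof either: it is a research plan whose decisive step is left unresolved. To your credit, the parts you do assert are mostly sound. The reduction $N_\alpha(d) \le d + 1 + \dim\ker(\lambda I - A_{C_1})$ does follow from the paper's proof of \cref{thm:main}, and your observation that an $O_\alpha(1)$ bound on this kernel dimension would suffice is exactly the remark the paper itself makes after its final question in the last section. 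The reduction via \cite[Propositions 15 and 23]{JP20} to the case where $\lambda$ is a totally real algebraic integer maximal among its conjugates is also correct, as is the Galois rigidity fact that each conjugate of $\lambda$ appears as an eigenvalue of $A_{C_1}$ with the same multiplicity. (One small error: when $\lambda$ is totally real but \emph{not} maximal among its conjugates, $\lambda$ can still be an eigenvalue of an integer symmetric matrix --- its larger conjugates simply also appear --- so your claim that the kernel is automatically trivial outside the reduced case is wrong; it is harmless only because \cite{JP20} covers those cases by a different argument.)

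The genuine gap is the step you yourself flag: ``pass to a finite induced subgraph of $C_1$ whose spectral radius is \emph{exactly} $\lambda$, directly contradicting $k(\lambda) = \infty$.'' No mechanism is offered for this, and the difficulty is structural, not technical. The walk-counting and interlacing machinery of \cref{sec:eigen-mult} produces inequalities and approximate statements; nothing in that toolkit converts a large kernel of $p(A_{C_1})$ into an induced subgraph realizing $\lambda$ exactly as a spectral radius, and induced subgraphs of $C_1$ generically have spectral radii that only approximate $\lambda$ from below or overshoot it. The $\PSL(2,p)$ examples you cite show that bounded degree plus connectivity cannot yield $O(1)$ multiplicity, so the missing ingredient must come from the equiangular-lines constraints, but you do not specify a single concrete way in which positive semidefiniteness of $\lambda I - A_G + \tfrac12 J$ or the forbidden-subgraph analysis would enter the multiplicity bound --- only that they ``would likely need to be combined.'' That combination is precisely the open problem, so the proposal should be read as a (reasonable) survey of the obstruction rather than progress toward removing it.
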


\begin{question}
  How large does $d$ need to be for \cref{thm:main} to hold?
\end{question}

Many interesting questions can be asked regarding \cref{thm:eigen-mult} as well.

\begin{question} \label{q:delta-max}
  Fix $\Delta$. What is the maximum possible second eigenvalue multiplicity of a connected $n$-vertex graph with maximum degree at most $\Delta$?
\end{question}

\cref{thm:eigen-mult} shows that the $\lambda_2$ multiplicity is $O_\Delta(n/\log\log n)$. On the other hand, it cannot be better than $O(n^{1/3})$ when $\Delta \ge 4$ (see the remark at the end of \cref{sec:eigen-mult}).

\begin{remark}
  It is interesting to ask the same question when restricted to Cayley graphs of finite groups. 
  For abelian or nearly abelian groups (e.g., nilpotent of bounded step), the problem of eigenvalue multiplicities has interesting connections to deep results in Riemannian geometry. 
  Following the approach of Colding and Minicozzi~\cite{CM97} on harmonic functions on manifolds and Kleiner's proof~\cite{K10} of Gromov's theorem on groups of polynomial growth \cite{Gro81}, 
  Lee and Makarychev~\cite{LM08} showed that in groups with bounded doubling constant $K = \max_{R > 0} \abs{B(2R)}/{\abs{B(R)}}$ (where $B(R)$ is the ball of radius $R$), the second eigenvalue multiplicity of such a Cayley graph is bounded, namely at most $K^{O(\log K)}$. Note that a Cayley graph on a nilpotent group of bounded step (e.g., an abelian group) with a bounded number of generators has bounded doubling constant.

  The above discussion gives a substantial improvement to \cref{thm:eigen-mult} for non-expanding Cayley graphs. 
  On the other hand, for expander graphs (not necessarily Cayley), say, satisfying $\abs{N(A)}\ge (1+c)\abs{A}$ for all vertex subsets $A$ with $\abs{A} \le n/2$, the bound in \cref{thm:eigen-mult} can be improved to $O_{\Delta, j, c}(n/\log n)$. Indeed, for such expander graphs \cref{lem:small-dominating} can be improved as follows. Every maximal $r$-separated set is necessarily an $r$-net, and the size of such a set in this expander graph is at most $n/(1+c)^{\lfloor r/2 \rfloor}$, as can be seen by considering the sizes of the $\lfloor r/2 \rfloor$-neighborhoods which must necessarily be disjoint.
  However, there are Cayley graphs that expand at some scales and have bounded doubling at other scales. Neither of these techniques applies to such graphs.
\end{remark}

The following more refined question, where we fix $\lambda > 0$, appears to be more relevant to the problem of equiangular lines, especially in pinning down the asymptotics of the error term in \cref{thm:main}.

\begin{question}
  Fix $\Delta, \lambda > 0$. What is the maximum multiplicity that $\lambda$ can appear as the second eigenvalue of a connected $n$-vertex graph with maximum degree at most $\Delta$?
\end{question}

If the answer is $O(1)$ for some $\lambda$ and sufficiently large $\Delta$, then our proof shows that \cref{conj:const-error} holds for this $\lambda$.

Finally, there are many similarly flavored questions regarding $s$-distance sets and codes in $\RR^n$, the sphere, and other spaces. Complex versions and higher dimensional analogs are also worth exploring further. We state one of these questions here, which is partially addressed in a follow-up work~\cite{JTYZZ2}.

\begin{question}
  Fix $1>\alpha\ge 0>\beta\ge -1$. What is the maximum size of a spherical $\{\alpha,\beta\}$-code in $\RR^d$? That is, what is the maximum number of unit vectors in $\RR^d$ such that all pairwise inner products are $\alpha$ or $\beta$?
\end{question}

\paragraph{\bfseries Acknowledgments.} This research was conducted while Jiang was an Instructor in Applied Mathematics at MIT, and while Yao and Zhang were participants and Tidor was a mentor in the Summer Program in Undergraduate Research (SPUR) of the MIT Mathematics Department. We thank David Jerison and Ankur Moitra for their role in advising the program. 
We thank Assaf Naor for pointing us to \cite{LM08}.

\end{document}